\numberwithin{equation}{section}
\DeclareMathOperator{\supp}{supp}
\begin{document}

\baselineskip 16.1pt \hfuzz=6pt

\theoremstyle{plain}
\newtheorem{theorem}{Theorem}[section]
\newtheorem{prop}[theorem]{Proposition}
\newtheorem{lemma}[theorem]{Lemma}
\newtheorem{corollary}[theorem]{Corollary}
\newtheorem{example}[theorem]{Example}
\newtheorem*{thmA}{Theorem A}
\newtheorem*{thmB}{Theorem B}
\newtheorem*{thmC}{Theorem C}
\newtheorem*{defnA}{Definition A}
\newtheorem*{defnB}{Definition B}
\newtheorem*{defnC}{Definition C}
\newtheorem*{defnD}{Definition D}
\newtheorem*{defnE}{Definition E}

\theoremstyle{definition}
\newtheorem{definition}[theorem]{Definition}
\newtheorem*{remark}{Remark}

\renewcommand{\theequation}
{\thesection.\arabic{equation}}

\allowdisplaybreaks
\newcommand{\lc}{\rule{0pt}{8pt}^c}
\newcommand{\XX}{X}
\newcommand{\GG}{\mathop G \limits^{    \circ}}
\newcommand{\GGs}{{\mathop G\limits^{\circ}}}
\newcommand{\GGtheta}{{\mathop G\limits^{\circ}}_{\theta}}
\newcommand{\xoneandxtwo}{X_1\times\mathcal X_2}

\newcommand{\GGp}{{\mathop G\limits^{\circ}}}

\newcommand{\GGpp}{{\mathop G\limits^{\circ}}_{\theta_1,\theta_2}}

\newcommand{\e}{\varepsilon}
\newcommand{\bmo}{{\rm BMO}}
\newcommand{\vmo}{{\rm VMO}}
\newcommand{\cmo}{{\rm CMO}}
\newcommand{\Z}{\mathbb{Z}}
\newcommand{\N}{\mathbb{N}}
\newcommand{\R}{\mathbb{R}}
\newcommand{\C}{\mathbb{C}}
\newcommand{\Rn}{\mathbb{R}^n}

\def\S{{\bf S}}
\def\xia {{\xi^{(1)}}}
\def\xib {{\xi^{(2)}}}
\def\xin {{\xi^{(n)}}}
\def\Phia {\Phi^{(1)}}
\def\Phib {\Phi^{(2)}}
\def\Phin {\Phi^{(n)}}
\def\muv {{\mu,v}}
\def\a {{^{(1)}}}
\def\b {{^{(2)}}}
\def\na {{n_1}}
\def\nb {{n_2}}
\def\Ia {{I_1}}
\def\Ib {{I_2}}
\def\Ja {{J_1}}
\def\Jb {{J_2}}
\def\tona {{1,2,\cdots,n_1}}
\def\tonb {{1,2,\cdots,n_2}}
\def\ma {{m_1}}
\def\mb {{m_2}}
\def\H{{\bf H}}
\def\L{{\bf L}}
\def\mt{\longrightarrow}
\def\bel{\begin{eqnarray}\label}
\def\eeq{\end{eqnarray}}
\def\F{\mathfrak{F}}
\def\etaa{{\eta^{(1)}}}
\def\etab{{\eta^{(2)}}}
\def\bel{\begin{eqnarray}\label}
\def\eeq{\end{eqnarray}}
\def\p{{\partial}}

\pagestyle{myheadings}\markboth{\rm\small
}{\rm\small Boundedness of singular integrals }

\title[New Type Fourier Integral Operators]
{Boundedness of New Type Fourier Integral Operators with Product Structure}

\author{{Chaoqiang Tan and Zipeng Wang}}

\subjclass[2020]{Primary 42B20; Secondary 42B30,42B37,42B15}

\keywords{Fourier integral operators, Hardy space, Multi-parameter,symbol function, Phase function.}

\begin{abstract}
We investigate a class of Fourier integral operators with weakened symbols, which satisfy a multi-parameter differential inequality in $\R^n$. We establish that these operators retain the classical $L^p$ boundedness and the $H^1$ to $L^1$ boundedness. Notably, the Hardy space considered here is the traditional single-parameter Hardy space rather than a product Hardy space.
\end{abstract}
\maketitle

\section{Introduction}\label{sec:introduction}
\setcounter{equation}{0}
Fourier integral operators are a crucial focus of harmonic analysis and find widespread applications in partial differential equations. They are particularly instrumental in solving initial value problems associated with wave equations, where the solution is often expressed through the corresponding Fourier integral operator derived from the initial data. 

A Fourier integral operator (FIO) is defined by:
\begin{eqnarray}\label{F}
\F f(x)=\int_{\R^n}e^{2\pi i \Phi(x,\xi)}\sigma(x,\xi)\hat{f}(\xi)d\xi,
\end{eqnarray}
where $\hat{f}(\xi)=\int_{\R^n}f(x)e^{-2\pi i x\cdot \xi}d\xi$.
The symbol function $\sigma(x,\xi)\in C^\infty(\R^n\times\R^n)$, and its support $\supp(\sigma)\subseteq E\times \R^n$, where $E$ is a compact set in $\mathbb{R}^n$. The phase $\Phi$ is  real-valued, homogeneous of degree $1$ in $\xi$ and smooth in $E\times (\R^n\setminus 0)$. Moreover, it satisfies   
 the non-degeneracy condition
\begin{eqnarray*}
 \det\left[{\p^2\Phi\over \p x_i\p \xi_j}\right]\left(x,\xi\right)~\neq~0,
 \end{eqnarray*}
on $E\times (\R^n\setminus 0)$.  

The foundational theory of Fourier integral operators (FIOs) on $\R^n$ was initiated and thoroughly developed by H\"ormander \cite{Hormander}. In his seminal work, he introduced the symbol class $S^m$ of order $m$, denoted as:
\bel{class1}
\left|\p_\xi^\alpha\p_{x}^\beta \sigma(x,\xi)\right| \leqslant C_{\alpha,\beta} (1+|\xi|)^{m-|\alpha|}
\eeq
for all multi-indices $\alpha$ and $\beta$.

For symbols $\sigma\in S^0$, H\"ormander demonstrated in \cite{Hormander} that the corresponding FIO $\F$ is bounded on $L^2(\R^n)$. This result can also be found in the work of Eskin \cite{Eskin}.In contrast to the $L^2$ result mentioned above, it is well-established that an FIO of order zero does not remain bounded on $L^p(\R^n)$  spaces for values of $p$ different from 2. The optimal $L^p$ estimate for FIOs  was first explored by Duistermaat and H\"{o}rmander \cite{Duistermaat-Hormander}, followed by investigations by Colin de Verdi\'{e}re and Frisch \cite{Colin-Frisch},  Brenner \cite{Brenner}, Peral \cite{Peral}, Miyachi \cite{Miyachi}, Beals \cite{Beals}, culminating in the iconic results obtained by Seeger, Sogge, and Stein\cite{S.S.S}.

\begin{theorem}[Seeger, Sogge and Stein]
 Let $\F$ be a Fourier integral operator defined in \eqref{F}. Assume that the symbol $\sigma\in S^m$ for some $-\frac{n-1}{2}\leqslant m\leqslant 0.$  We then have the following result:\\
{\rm (i)} If $m=-{n-1\over 2},$ then both $\F$ and its adjoint $\F^*$ map $H^1(\R^n)$ to $L^1(\R^n)$.\\
{\rm (ii)} If $-\frac{n-1}{2}< m\leqslant 0$ and $|{1\over p}-{1\over 2}|\leqslant{-m\over n-1},$ then\ $\F:L^p(\R^n)\to L^p(\R^n).$
\end{theorem}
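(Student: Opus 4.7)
The plan is to establish part (i) first (the endpoint $H^1\to L^1$ bound at $m=-(n-1)/2$), then obtain part (ii) by complex interpolation with the $L^2\to L^2$ result of H\"ormander. The key tool is a second dyadic decomposition of the symbol $\sigma$ in the angular variable, which exploits the non-degeneracy of the phase.

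First I would perform a standard dyadic decomposition in $|\xi|$: pick a Littlewood--Paley partition $1 = \sum_{k\geq 0}\chi_k(\xi)$ with $\chi_k$ supported on $|\xi|\sim 2^k$, and set $\sigma_k = \sigma\chi_k$, $\F_k$ the corresponding operator. For each $k$, partition the sphere $S^{n-1}$ into roughly $2^{k(n-1)/2}$ spherical caps of radius $\sim 2^{-k/2}$. This induces a further decomposition $\sigma_k = \sum_\nu \sigma_{k,\nu}$, where each $\sigma_{k,\nu}$ is supported in a ``plate'' in $\xi$-space of size $2^k\times(2^{k/2})^{n-1}$, aligned along a direction $\xi_\nu$. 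The non-degeneracy condition $\det[\p^2\Phi/\p x\,\p\xi]\neq 0$ ensures that under the canonical relation generated by $\Phi$, the kernel of $\F_{k,\nu}$ concentrates on a physical-space plate of dual dimensions $1\times 2^{-k/2}$ in a direction $x_\nu$ determined by $\xi_\nu$.

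Next, to prove the $H^1\to L^1$ bound I would reduce to atoms: it suffices to show $\|\F a\|_{L^1}\leq C$ uniformly over $H^1$-atoms $a$ supported in a ball $B = B(x_0,r)$ with $\|a\|_\infty\leq r^{-n}$ and $\int a = 0$. Split $\F a = \sum_k\F_k a$ into low frequencies $2^k\leq r^{-1}$ and high frequencies $2^k> r^{-1}$. The low-frequency portion is controlled by combining the $L^2$ boundedness of each $\F_k$ with the cancellation $\int a = 0$, dominated on a fixed compact set via Cauchy--Schwarz. For the high-frequency portion, use the plate decomposition: for each $k$, localize the kernel of $\F_k$ on the union of physical plates $T_{k,\nu}$ emanating from $B$, each of measure $\sim r\cdot 2^{-k(n-1)/2}$. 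Outside a fattened neighborhood of $\bigcup_\nu T_{k,\nu}$ the kernel is small by repeated integration by parts in $\xi$ in directions transverse to $\xi_\nu$; inside, one bounds the $L^1$ norm by $|\bigcup_\nu T_{k,\nu}|^{1/2}\|\F_k a\|_{L^2}$ via Cauchy--Schwarz, and exploits the almost-orthogonality of the $\{\sigma_{k,\nu}\}_\nu$ together with the improved $L^2$ bound $\|\F_k\|_{L^2\to L^2}\lesssim 2^{km}$ to sum. The total plate measure times the $2^{km}$ factor combines to give a bound summable in $k$ precisely when $m = -(n-1)/2$.

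The main obstacle is the precise kernel estimate for $\F_{k,\nu}$: one must integrate by parts along directions transverse to $\xi_\nu$, using second-order information on $\Phi$ together with the bound $|\p_\xi^\alpha\sigma_{k,\nu}|\lesssim 2^{k(m-|\alpha|/2)}$ in the angular directions and $2^{k(m-|\alpha|)}$ in the radial direction, to show that the kernel decays rapidly off the physical plate. This requires trivialising the canonical relation locally by choosing coordinates in which $\Phi$ takes a normal form (e.g.\ $\Phi(x,\xi)=x\cdot\xi + \phi(x,\xi/|\xi|)|\xi|$), and carefully tracking how the second dyadic decomposition interacts with these coordinates. Once this kernel estimate and the associated almost-orthogonality are in hand, the atomic $H^1\to L^1$ bound follows, and part (ii) is obtained by applying complex interpolation along an analytic family $\sigma_z$ of symbols with $\mathrm{Re}\,z$ ranging between $0$ and $-(n-1)/2$, interpolating the trivial $L^2$ bound at $\mathrm{Re}\,z=0$ against the $H^1\to L^1$ bound at the endpoint.
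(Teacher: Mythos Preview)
Your outline is correct and is precisely the classical Seeger--Sogge--Stein argument. Note, however, that the paper does not give its own proof of this statement: it is quoted as a known result, and the paper instead proves the strict generalization Theorem~\ref{th1} for the weaker product-type class $\S^m\supset S^m$. That proof follows the same skeleton you describe---atomic reduction, dyadic-in-$|\xi|$ plus angular decomposition into $\sim 2^{j(n-1)/2}$ plates, an exceptional set $B^*$ handled by Cauchy--Schwarz, and kernel decay off $B^*$ via integration by parts in $\xi$---with two organizational differences worth noting. First, on $B^*$ the paper applies a single $L^p\to L^2$ Sobolev-type bound for the full operator (Lemma~\ref{Lemma One}(ii)) rather than summing frequency-localized gains $\|\F_k\|_{L^2\to L^2}\lesssim 2^{km}$ as you do; both routes work. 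Second, the extra machinery in the paper---the stratification $\Omega_j=\bigcup_J\Omega_j^J$ by vanishing coordinate sets and the secondary decomposition $\delta_\ell$ in the proof of Lemma~\ref{lem-tan}---exists solely to cope with the anisotropic decay $\prod_i(1+|\xi_i|)^{-\alpha_i}$ in $\S^m$, and collapses to the standard argument when $\sigma\in S^m$. One small slip in your write-up: each physical plate $T_{k,\nu}$ has measure $\sim 2^{-k}\cdot(2^{-k/2})^{n-1}=2^{-k(n+1)/2}$, not $r\cdot 2^{-k(n-1)/2}$; the factor of $r$ appears only after summing in $k$ over $2^{-k}\le r$.
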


\begin{remark}
The $L^p$ boundedness of $\F$ above is sharp. For a detailed discussion, please refer to section {\bf 6.13}, chapter IX in Stein's book \cite{Stein}.
\end{remark}

A fundamental premise of our paper is to relax the conditions on the symbol $\sigma$ and examine the boundedness properties of $\F$. To this end, we consider the symbol conditions with a product structure as follows.
\begin{definition}
We denote $\sigma(x,\xi)\in \S^m,$ when
\bel{calss2}
|\partial_{\xi}^\alpha \partial_x^\beta\sigma(x,\xi)|\leqslant C_{\alpha,\beta} (1+|\xi|)^m \prod\limits_{i=1}^n\frac1{(1+|\xi_i|)^{\alpha_i}}.
\eeq
for all multi-indices $\alpha$ and $\beta$.
\end{definition}

\begin{remark}
{\rm (a)} It is evident that  $\sigma\in S^m\Rightarrow \sigma\in \S^m,$ but the converse is not true.
For instance, if we select a specific $\sigma(x,\xi)\in S^m,$ and define $\tilde{\sigma}(x,\xi)=\sigma(x,\xi)\frac{\xi_1^2}{1+\xi_1^2},$ generally $\tilde{\sigma}\not\in S^m$ (for dimensions $n \geqslant 2$),but $\tilde{\sigma}\in \S^m.$

{\rm (b)} The exploration of operators compatible with multi-parameters traces its roots back to the era of Jessen, Marcinkiewicz, and Zygmund. Over the past decades, significant advancements have been made in this field. Notable contributions include those by R. Fefferman \cite{R.Fefferman}, Fefferman and Stein \cite{R-F.S}, Cordoba and Fefferman \cite{Cordoba-Fefferman}, Chang and Fefferman \cite{Chang-Fefferman}, and M\"{u}ller, Ricci, and Stein \cite{M.R.S}.
\end{remark}

With the aforementioned preparations in place, we are now poised to present the main results of this paper.
\begin{theorem}[{\bf Main result}]\label{th1}
 Let $\F$  be a Fourier integral operator defined in \eqref{F}. Assume that the symbol $\sigma\in \S^m$ for some $-\frac{n-1}{2}\leqslant m\leqslant 0.$ We then have the following result:\\
{\rm (i)} If $m=-{n-1\over 2},$ then both $\F$ and its adjoint $\F^*$ map $H^1(\R^n)$ to $L^1(\R^n)$.\\
{\rm (ii)} If $-\frac{n-1}{2}< m\leqslant 0$ and $|{1\over p}-{1\over 2}|\leqslant{-m\over n-1},$ then\ $\F:L^p(\R^n)\to L^p(\R^n).$
\end{theorem}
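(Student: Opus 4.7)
The plan is to adapt the Seeger--Sogge--Stein scheme to the weaker symbol class $\S^m$. By a Stein-type analytic interpolation argument combined with duality (the class $\S^m$ is closed under taking formal adjoints), the theorem reduces to the two endpoint estimates: $L^2$-boundedness of $\F$ when $m=0$, and $H^1\to L^1$-boundedness of $\F$ when $m=-(n-1)/2$. Part (ii) then follows by interpolating between these endpoints against $L^2$.

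For the symbol decomposition I would proceed in two stages. First, an $n$-parameter dyadic frequency decomposition:
\[
\sigma(x,\xi)=\sum_{\vec k\in\N^n}\sigma^{\vec k}(x,\xi), \qquad \sigma^{\vec k}(x,\xi)=\sigma(x,\xi)\prod_{i=1}^n\psi_{k_i}(\xi_i),
\]
with $\{\psi_k\}$ a standard one-dimensional Littlewood--Paley partition of unity. Each $\sigma^{\vec k}$ is supported in the dyadic box $\{\xi : |\xi_i|\sim 2^{k_i}\}$, hence in the shell $|\xi|\sim 2^{k^*}$ with $k^*=\max_i k_i$, and the inequality \eqref{calss2} reads
\[
|\partial_\xi^\alpha\partial_x^\beta\sigma^{\vec k}(x,\xi)| \leq C_{\alpha,\beta}\, 2^{k^* m}\prod_{i=1}^n 2^{-k_i\alpha_i}.
\]
Second, on each shell I would superimpose the angular Seeger--Sogge--Stein decomposition into caps of opening $2^{-k^*/2}$ centered at directions $\xi^\nu$, yielding pieces $\sigma^{\vec k,\nu}$. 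For each piece, the classical SSS argument (stationary phase together with integration by parts transverse to the cap, based on the non-degeneracy of $\Phi$) produces rapid kernel decay off the bicharacteristic through $\xi^\nu$. The endpoint estimates then follow via atomic decomposition: for an $H^1$-atom $a$ on $B(x_0,r)$, the near-field $L^1$-norm of $\F a$ is controlled by Cauchy--Schwarz against $L^2$-boundedness, while the far-field is controlled by the kernel estimates.

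The main obstacle is the summation. SSS sum over a single dyadic index, whereas here we have an $n$-parameter family $\vec k$. For the $L^2$-endpoint this should be handled by a Cotlar--Stein / almost-orthogonality argument, exploiting the essentially disjoint frequency supports of the $\sigma^{\vec k}$. For the $H^1\to L^1$-endpoint, the crucial point is that the factor $\prod_i 2^{-k_i\alpha_i}$ in the symbol estimate yields \emph{separate} decay in each $k_i$, allowing a coordinate-by-coordinate summation against the cancellation of the atom; this is ultimately what allows the conclusion in terms of the classical single-parameter Hardy norm, rather than a product $H^1$-norm. The most delicate step will be to verify that the angular decomposition, whose geometry is dictated by the single parameter $k^*$, combines compatibly with the multi-parameter dyadic decomposition, requiring careful bookkeeping of how SSS caps of width $2^{-k^*/2}$ distribute across the dyadic boxes $\{|\xi_i|\sim 2^{k_i}\}$.
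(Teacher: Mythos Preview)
Your overall architecture (reduce to endpoints, atomic test for $H^1\to L^1$, angular SSS decomposition) matches the paper's, and your closing sentence---the bookkeeping of how caps distribute across dyadic boxes---points at the right mechanism. But two of your stated mechanisms are off, and this matters.

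First, the $L^2$ endpoint is much easier than you suggest: the $T^*T$ kernel estimate uses only $\partial_x^\beta\sigma$, and the differential inequality \eqref{calss2} in $x$ alone is identical to the classical one. So $\sigma\in\S^0$ gives $L^2$-boundedness for free, with no multi-parameter decomposition and no Cotlar--Stein needed. The paper also uses the $L^p\to L^2$ estimate (Lemma~\ref{Lemma One}, part~(ii)) to handle the near-field piece $B^*$ when $r<1$; your sketch omits this.

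Second, and more seriously, your account of the $H^1\to L^1$ summation is not the actual mechanism. You say the product decay $\prod_i 2^{-k_i\alpha_i}$ ``yields separate decay in each $k_i$, allowing a coordinate-by-coordinate summation against the cancellation of the atom.'' The atom has a single vanishing moment, not $n$ of them, so coordinatewise cancellation is unavailable; and the product symbol decay is used only \emph{inside} the kernel estimate for a fixed angular piece, not to sum the pieces. The paper's key idea is instead a geometric counting: choose the cap centers $\xi_j^\nu$ so that each coordinate $(\xi_j^\nu)_i$ is either $0$ or at least $2^{-j/2}$, and stratify by $J=\{i:(\xi_j^\nu)_i=0\}$. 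For $J\neq\emptyset$ the number of such caps is only $\lesssim 2^{j(n-1-|J|)/2}$, which beats the logarithmic loss $(1+j)^{|J|}$ coming from the secondary dyadic sum over $\ell$ in the kernel bound (Lemma~\ref{lem-tan}); the resulting $(1+j)^{|J|}2^{-j|J|/2}$ sums in $j$ with no use of the atom's cancellation at all. The atom's vanishing moment and the exceptional set $B^*$ enter only for the ``generic'' caps with $J=\emptyset$, where the argument is exactly classical SSS. A further technical point you would hit in your ordering (dyadic first, then angular): the orthogonal change of variables adapted to a cap must be chosen block-diagonal, acting as the identity on the $J$-coordinates, so that the product symbol estimate survives the rotation. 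Without this, the coordinates in which $\sigma$ has anisotropic decay get scrambled and the kernel bound fails.
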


\begin{remark}
{\rm (a)} For part {\rm (i)} of Theorem \ref{th1}, our proof follows the framework outlined in Stein's book \cite{Stein}. However, due to the weaker conditions for the symbol function $\sigma$, we need to perform a more intricate decomposition of the operator's kernel. The details can be found in the subsequent proof of Lemma \ref{lem-tan}.

{\rm (b)} Surprisingly, when $\sigma\in\S^{-{n-1\over 2}}$, even though the symbol function $\sigma$  has an estimation with a product structure, Theorem \ref{th1} demonstrates that $\F$ is bounded from the single-parameter $H^1(\R^n)$ to $L^1(\R^n)$. This specific single-parameter $H^1(\R^n)$ space was introduced by Fefferman and Stein \cite{FC.S} and characterized by an atomic decomposition by Coifman \cite{Coifman}. This indicates that we have provided an example with a certain product structure that remains bounded from the single-parameter $H^1(\R^n)$ to $L^1(\R^n)$.
\end{remark}

The structure of this paper is as follows: In Section 2, we present the lemmas required for proving Theorem\ref{th1}. In Section 3, we first provide more detailed cone decomposition and notation. Then, assuming Lemma \ref{lem-tan} holds, we present the proof of Theorem \ref{th1}. In Section 4, we prove Lemma \ref{lem-tan}. Because our symbol conditions are weakened, a more detailed decomposition of the kernel function is needed here.

\section{Preliminary Lemma}
Before proceeding to establish Theorem \ref{th1}, we first prove the following lemma.
\begin{lemma} \label{Lemma One}
Assume that $\sigma\in\S^m$ with  $-\frac{n}{2}<m\leqslant0$. We have\\
{\rm (i)} $\F :L^2(\R^n)\to L^2(\R^n).$\\
{\rm (ii)} $\F :L^p(\R^n)\to L^2(\R^n),$ if $\frac1{p}=\frac12-\frac{m}{n}.$\\
{\rm (iii)} $\F :L^2(\R^n)\to L^q(\R^n),$ if $\frac1{q}=\frac12+\frac{m}{n}.$\\
\end{lemma}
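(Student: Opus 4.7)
The plan is to establish part (i) first by the $TT^*$ method, and to deduce parts (ii) and (iii) from (i) via a symbol factorization, Hardy-Littlewood-Sobolev, and duality.

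For part (i), write $\sigma(x,\xi) = \tilde{\sigma}(x,\xi) \cdot (1+|\xi|^2)^{m/2}$; the Leibniz rule together with the pointwise inequality $(1+|\xi|) \geq (1+|\xi_i|)$ shows that $\tilde{\sigma} \in \S^0$. Since the Bessel multiplier $J^m := (1-\Delta)^{m/2}$ is an $L^2$-bounded Fourier multiplier for $m \leq 0$ and $\F = \tilde{\F} \circ J^m$, the claim reduces to $\tilde{\F} : L^2 \to L^2$ for $\tilde{\sigma} \in \S^0$. For this I would apply the $TT^*$ method: the kernel of $\tilde{\F} \tilde{\F}^*$ equals
\[
K(x, x') = \int_{\R^n} e^{2\pi i [\Phi(x, \xi) - \Phi(x', \xi)]} \tilde{\sigma}(x, \xi) \overline{\tilde{\sigma}(x', \xi)} d\xi.
\]
Writing $\Phi(x,\xi) - \Phi(x',\xi) = (x-x') \cdot v(x,x',\xi)$ with $M := \partial_\xi v$ invertible and bounded (by the non-degeneracy and homogeneity of $\Phi$), an iterated integration-by-parts scheme combining the standard radial transport operator along the direction $M^T(x-x')$ (to extract $|x-x'|^{-N}$ factors) with coordinate-wise integrations by parts in individual $\xi_i$ (to exploit the product-type decay $|\partial_\xi^\alpha \tilde{\sigma}| \lesssim \prod_i (1+|\xi_i|)^{-\alpha_i}$ and ensure $L^1$-integrability in $\xi$) yields $|K(x,x')| \lesssim (1+|x-x'|)^{-N}$ for any $N > n$. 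Young's inequality then gives $\tilde{\F} \tilde{\F}^* : L^2 \to L^2$, hence $\tilde{\F}: L^2 \to L^2$.

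For part (ii), the factorization $\F = \tilde{\F} \circ J^m$ together with $\tilde{\F}: L^2 \to L^2$ from (i) reduces the claim to $J^m: L^p \to L^2$ for $1/p = 1/2 - m/n$, which is the Hardy-Littlewood-Sobolev inequality for the Bessel potential of order $-m \in (0, n/2)$. For part (iii), by duality $\F : L^2 \to L^q$ is equivalent to $\F^* : L^{q'} \to L^2$; the adjoint $\F^*$ is (after stationary-phase reduction to canonical form) a Fourier integral operator with phase $-\Phi$ and symbol in $\S^m$, and so (ii) applied to $\F^*$ yields the bound with $1/q' = 1/2 - m/n$, i.e., $1/q = 1/2 + m/n$.

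The main obstacle is the kernel decay in part (i). The product-type decay of $\S^0$ symbols is weaker than the classical radial decay $(1+|\xi|)^{-|\alpha|}$, so a single radial integration by parts does not directly yield an $L^1$-integrable kernel in $\xi$. The argument must carefully combine radial integration by parts (producing $|x-x'|^{-1}$ factors) with coordinate-wise integrations by parts (producing $(1+|\xi_i|)^{-1}$ decay per derivative), and the compatibility of the two schemes relies on a decomposition of $\xi$-space into regions where the components of the phase gradient $M^T(x-x')$ control each coordinate direction, exploiting $|M^T(x-x')| \sim |x-x'|$ uniformly in the high-frequency regime.
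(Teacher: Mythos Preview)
Your approach to part (i) contains a concrete error: the claimed bound $|K(x,x')|\lesssim(1+|x-x'|)^{-N}$ for the $\tilde\F\tilde\F^*$ kernel cannot hold, since on the diagonal $K(x,x)=\int_{\R^n}|\tilde\sigma(x,\xi)|^2\,d\xi$ is in general infinite for $\tilde\sigma\in\S^0$ (take e.g.\ $\tilde\sigma(x,\xi)=\phi(x)$). At best the $TT^*$ kernel behaves like $|x-x'|^{-n}$ near the diagonal, and then Young's inequality is not enough; moreover, obtaining even that singular bound by $\xi$-integration by parts runs straight into the product-decay obstacle you yourself flag at the end.

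The paper sidesteps this entirely by using $T^*T$ instead of $TT^*$. Writing $Tf(x)=\int e^{2\pi i\Phi(x,\xi)}\sigma(x,\xi)f(\xi)\,d\xi$, the kernel of $T^*T$ is
\[
K^\sharp(\xi,\eta)=\int_{\R^n} e^{2\pi i(\Phi(x,\eta)-\Phi(x,\xi))}\,\sigma(x,\eta)\overline{\sigma(x,\xi)}\,dx,
\]
an integral over the \emph{compact} $x$-support of $\sigma$. After a narrow-cone decomposition in $\xi$ one has $|\nabla_x(\Phi(x,\xi)-\Phi(x,\eta))|\gtrsim|\xi-\eta|$, and $N$ integrations by parts in $x$ give $|K^\sharp(\xi,\eta)|\lesssim(1+|\xi-\eta|)^{-N}$ directly. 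The key observation is that this uses only the $x$-regularity of $\sigma$, which is identical for the classical class $S^m$ and the product class $\S^m$; the weakened $\xi$-decay never enters. The asymmetry between $x$ (compact support) and $\xi$ (non-compact) is precisely what makes $T^*T$ tractable and $TT^*$ hard here.

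Your part (ii) agrees with the paper. For part (iii) the duality reduction to $\F^*:L^{q'}\to L^2$ is fine, but the assertion that $\F^*$ is, after a canonical change of variables, an FIO with symbol in $\S^m$ is unjustified: putting $\F^*$ into standard form requires inverting $\xi\mapsto\nabla_x\Phi(x,\xi)$, and there is no reason the coordinate-wise product structure defining $\S^m$ survives such a nonlinear change of $\xi$-variables. The paper instead bounds the $TT^*$ kernel by $|x-y|^{-n-2m}$ and applies Hardy--Littlewood--Sobolev to get $TT^*:L^p\to L^q$, from which $T^*:L^p\to L^2$ follows.
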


\begin{proof}
{\bf step 1: Proof of Lemma\ref{Lemma One}'s part (i)}

Employing the Plancherel theorem streamlines our task, reducing it to considering the operator
\begin{eqnarray*}
T f(x)=\int_{\R^n} e^{2\pi i\Phi(x,\xi)} \sigma(x,\xi) f(\xi) d\xi
\end{eqnarray*}
whose adjoint takes the form
\begin{eqnarray*}
T^* f(\xi)=\int_{\R^n} e^{-2\pi i\Phi(x,\xi)} \bar{\sigma}(x,\xi) f(x) dx.
\end{eqnarray*}

Now we define a notion of an {\it narrow cone} as follows: If vectors $\xi$ and $\eta$ lie within the same narrow cone, with $|\eta|\leqslant|\xi|$, we express $\eta=\rho\xi+\eta^\dagger$, where $0\leqslant\rho\leqslant1$ and $\eta^\dagger$ is orthogonal to $\xi$. A key characteristic of this construction is the requirement that $|\eta^\dagger|\leqslant c\rho|\xi|$, where $c$ is a small positive constant related to the phase function $\Phi$.

Evidently, the frequency domain can be partitioned in such a way that both $T$ and $T^*$ can be represented as finite combinations of partial operators. Each of these partial operators features a symbol whose support resides within one of these narrowly cones. Importantly, this decomposition does not interfere with the differentiation characteristics of $\sigma(x,\xi)$ with respect to $x$.

Recalling the estimate presented in Section {\it 3.1.1}, Chapter IX of Stein's book \cite{Stein}, we are reminded that
\bel{Phi_x est}
\left|\nabla_x \Big(\Phi(x,\xi)-\Phi(x,\eta)\Big)\right|\geqslant C |\xi-\eta|,
\eeq
whenever vectors $\xi$ and $\eta$ are contained within the same narrow cone as previously defined.

\begin{remark}
In addressing the $L^2$-boundedness of the operator $T$ for symbols $\sigma\in\S^m$, our analysis confines the support of $\sigma(x,\xi)$ to lie within one of these narrow cones. Furthermore, it is crucial to note that the symbol $\sigma$ is only required to adhere to the differential inequalities concerning the $x$-variable.
\end{remark}
We now turn our attention to the composition of operators,
\begin{eqnarray*}
T^*T f(\xi)=\int_{\R^n} f(\eta)K^\sharp(\xi,\eta)d\eta,
\end{eqnarray*}
where the associated kernel is defined as
\begin{eqnarray*}
K^\sharp(\xi,\eta)=\int_{\R^n} e^{2\pi i\left(\Phi(x,\eta)-\Phi(x,\xi)\right)} \sigma(x,\eta)\bar{\sigma}(x,\xi) dx.
\end{eqnarray*}

Given that $\sigma(x,\xi)$ enjoys a compact support in the $x$-domain, recalling the estimate (\ref{Phi_x est}), we may employ an $N$-fold integration by parts with respect to $x$ to yield
\begin{eqnarray*}
\begin{array}{lr}
\left|K^\sharp(\xi,\eta)\right|\lesssim 
\left|\xi-\eta\right|^{-N}
\left|\int_{\R^n} e^{2\pi i\left(\Phi(x,\eta)-\Phi(x,\xi)\right)} \nabla_x^N\Big(\sigma(x,\eta)\bar{\sigma}(x,\xi)\Big)dx\right|,
\end{array}
\end{eqnarray*}
whenever  $\xi\neq\eta$. 

Consequently, we arrive at the estimation
\begin{eqnarray*}
\left|K^\sharp(\xi,\eta)\right|\leqslant C \left({1\over 1+|\xi-\eta|}\right)^{N},
\end{eqnarray*}
which holds for any $N \geqslant 1.$
To establish the $L^2$-boundedness of $T$, we proceed by selecting $N > n$ and deduce that

\begin{eqnarray*}
\left\|T^*T f\right\|_{L^2(\R^n)}
&\lesssim& \Big\| \int_{\R^n} \left|f(\cdot-\eta)\right| \left({1\over 1+|\eta|}\right)^{N} d\eta \Big\|_{L^2(\R^n)}
\\
&=& \left\| f\right\|_{L^2(\R^n)}\int_{\R^n}  \left({1\over 1+|\eta|}\right)^{N}d\eta
\\
&\lesssim& \left\| f\right\|_{L^2(\R^n)},
\end{eqnarray*}

With the $L^2$-boundedness of $T$ firmly established, our attention now shifts towards verifying part {\rm (ii)} of Lemma\ref{Lemma One}.

{\bf step 2: Proof of Lemma\ref{Lemma One}'s part (ii)}

For the case $m=0$, step 1 has already provided the proof, so we can assume that $-\frac{n}{2}<m<0.$ Let $\F =\F _0\F_1,$ where $\F_0f(x)=\int_{\R^n}e^{2\pi i \Phi(x,\xi)}\sigma_0(x,\xi)\hat{f}(\xi)d\xi,$ with $\sigma_0(x,\xi)=\sigma(x,\xi)\cdot (1+|\xi|^2)^{-\frac{m}{2}}\in \S^0.$ Meanwhile, $\F_1f(x)=\int_{\R^n}e^{2\pi i \Phi(x,\xi)}(1+|\xi|^2)^{\frac{m}{2}}\hat{f}(\xi)d\xi.$ Firstly, by utilizing $\sigma_0\in \S^0$ and step 1, it is derived that $\F_0:L^2(\R^n)\to L^2(\R^n).$ Therefore, it only remains to prove that $\F_1:L^p(\R^n)\to L^2(\R^n).$

Note that $\F_1f(x)=\int_{\R^n} K(x-y)f(y)dy,$ where the kernel $K$ satisfies $|K(z)|\leqslant A|z|^{-n-m}.$ (see section {\bf 4}, chapter VI in Stein's book \cite{Stein}.)
By applying the Hardy-Littlewood-Sobolev inequality, namely the following inequality (see section {\bf 4.2}, chapter VIII in Stein's book \cite{Stein}).
$$
\|f\ast |y|^{-\gamma}\|_{L^q(\R^n)}\lesssim \|f\|_{L^p(\R^n)},
\ \text{for}\ 0<\gamma<n,1<p<q<\infty \ \text{and}\ \frac1q=\frac1p-\frac{n-\gamma}{n}.
$$
it follows that
$\F_1:L^p(\R^n)\to L^2(\R^n),$ for $\gamma=n+m,q=2$ and $\frac12=\frac1p-\frac{-m}{n}.$

{\bf step 3: Proof of Lemma\ref{Lemma One}'s part {\rm (iii)}}

For the case $m=0$, step 1 has already provided the proof, so we can assume that $-\frac{n}{2}<m<0$. Let $T f(x) = \int_{\R^n} e^{2\pi i\Phi(x,\xi)} \sigma(x,\xi) f(\xi) d\xi,$ then we have $\F=T\mathcal F,$ where $\mathcal F f(\xi)=\hat{f}(\xi)=\int_{\R^n}f(x)e^{-2\pi i x\cdot \xi}d\xi.$ Since $\F^*=\mathcal F^* T^*,$ we only need to prove that $T^*:L^p\to L^2,$ where $\frac1p+\frac1q=1.$

Because 
$$
\|T^*(f)\|_{L^2}^2=\langle T^*f,T^*f\rangle =\langle TT^*f,f\rangle\leqslant \|TT^*f\|_{L^q}\|f\|_{L^p},
$$
we only need to prove that $TT^*:L^p\to L^q.$ Following the approach in step 1, we can presume that the support of $\sigma(x,\xi)$ is confined within a narrow cone.

Since $\triangledown_\xi [\Phi(x,\xi)-\Phi(y,\xi)]=\Phi_{x,\xi}(x,\xi)\cdot (x-y)+O(|x-y|^2),$ we only need the support of $\sigma$ to be sufficiently small, and for any $\xi\ne 0,$ $|\triangledown_\xi [\Phi(x,\xi)-\Phi(y,\xi)]|\geqslant c|x-y|,$ when $x,y\in \supp(a(\cdot,\xi)).$

We can also replace $\sigma(x,\xi)$ with $a_\varepsilon (x,\xi)=\gamma(\varepsilon \xi)\cdot \sigma(x,\xi),$ where $\gamma\in C_0^\infty(\R^n)$ and $\gamma(0)=1.$ This allows us to assume that $\sigma(x,\xi)$ has compact support, noting that the inequality to be estimated below should not depend on $\varepsilon$, i.e., it should not depend on the support of $\sigma$ about $\xi$.

Notice that $TT^*f(x)=\int_{\R^n} K(x,y)f(y)dy,$ where $K(x,y)=\int_{\R^n}e^{2\pi i [\Phi(x,\xi)-\Phi(y,\xi)]}\sigma(x,\xi)\bar{\sigma}(y,\xi)d\xi.$
Using the technique from section {\bf 3.1.4}, chapter IX in Stein's book \cite{Stein}, it can be proven that $|K(x,y)|\lesssim |x-y|^{-n-2m}.$
By applying the Hardy-Littlewood-Sobolev inequality, we have $\|TT^*f\|_{L^q(\R^n)}\lesssim \|f\|_{L^p(\R^n)},$
where $\gamma=n+2m,\frac1{q}=\frac12+\frac{m}{n}$ and $\frac1{p}=\frac12-\frac{m}{n}.$
Lemma\ref{Lemma One} is thus proved.
\end{proof}

\section{Proof of Theorem\ref{th1}}

Now we can proceed to prove Theorem \ref{th1}'s Part {\rm (i)}.
Since a smaller $m$ yields more advantageous estimates for $\sigma$, it suffices to show that when $\sigma\in \S^{-\frac{n-1}{2}}$, both $\F$ and $\F^*$ map $H^1(\R^n)$ to $L^1(\R^n)$.

Let us begin with proving that $\F:H^1(\R^n) \to L^1(\R^n).$ According to the atomic decomposition theorem of Fefferman and Stein \cite{FC.S}, it is only necessary to verify:
$$\int_{\R^n} |\F a(x)|dx\lesssim 1,$$
where $a$ is an atom supported on a ball $B$ of radius $r$ centered at $x_0$, satisfying $\int_{\R^n}a(x)dx=0$ and $\|a\|_\infty \leqslant \frac1{|B|}.$

If $r\geqslant 1,$ leveraging $\sigma\in  \S^{-\frac{n-1}{2}} \Rightarrow {\sigma\in  \S^0},$  and considering the support conditions of $\sigma$ along with the operator $\F$ being $L^2$-bounded (as shown in Lemma \ref{Lemma One} above), we have
\begin{eqnarray*}
\int_{\R^n}|\F (a)|dx&\lesssim& \|\F (a)\|_2 \lesssim \|a\|_2 
\lesssim \frac1{r^{n/2}}\lesssim 1.
\end{eqnarray*}

Thus, we can assume that $r<1$. 
Before proceeding to the next step, we introduce a notation convention: Let $\Omega\subseteq \R^n$ and $J\subset \{1,2,\cdots,n\}.$ Define
$$\Omega^J=\{\xi\in \Omega:\xi_i=0,\forall i\in J \text{ and } \xi_i\ne 0,\forall i\not\in J\},$$
which signifies that for $\xi \in \Omega^J$, the set of coordinates where $\xi$ is zero precisely matches $J$.

In light of the necessity for partitioning directions in our proof, we preliminary fix a collection of sets $\Omega_j=\{\xi_j^v\}_v \subset {\mathbb S}^{n-1}$(the unit sphere), $j \geqslant 0$. These sets are required to fulfill the following conditions:\\
{\rm (i)}$|\xi_j^v-\xi_j^{v'}|\geqslant 2^{-\frac{j}{2}-2},$ if $v\ne v'.$\\
{\rm (ii)} $\bigcup\limits_{i=1}^n \{e_i,-e_i\}\subseteq \Omega_j,$ where $e_i=(0,\cdots,0,1,0,\cdots,0)^T,$ with the $1$ located at the $i$-th component.\\
{\rm (iii)} $\forall 1\leqslant i\leqslant n,$ $(\xi_j^v)_i=0$ or $|(\xi_j^v)_i|\geqslant 2^{-\frac{j}{2}}.$ \\
{\rm (iv)} For every $\xi\in {\mathbb S}^{n-1}$, let $J=\{i:1\leqslant i\leqslant n, |\xi_i|< 2^{-\frac{j}{2}}\},$ then there exists a $\xi_j^v\in \Omega_j^J,$ such that $\sum\limits_{i:i\in \{1,2,\cdots,n\}\setminus J}|(\xi-\xi_j^v)_i|^2<2^{-j-4}.$

\begin{remark}
{\rm (a)} It can be observed that $\Omega_j=\bigcup\limits_{J\subsetneq \{1,2,\cdots,n\}}\Omega_j^J.$

{\rm (b)} If $\xi_j^v\in \Omega_j^J,$ then $|(\xi_j^v)_i| \geqslant  2^{-\frac{j}{2}},$ $\forall i\in \{1,2,\cdots,n\}\setminus J$ and it equals zero for all $i$ in $J$.

{\rm (c)} The cardinality $|\Omega_j^J|\lesssim 2^{\frac{j(n-1-|J|)}{2}}$ and $|\Omega_j|\lesssim 2^{\frac{j(n-1)}{2}}.$
\end{remark}

Now, let us define $\tilde{R}_j^v=\{y:|y-x_0|\leqslant M 2^{-\frac{j}{2}},|\pi_j^v(y-x_0)|\leqslant M 2^{-j}\},$ where $M$ is a large constant to be determined later,
and $\pi_j^v$ represents the projection in the direction of $\xi_j^v$. We also set
$$R_j^v=\{x:|x_0-\Phi_\xi (x,\xi_j^v)|\leqslant M2^{-\frac{j}{2}},|\pi_j^v(x_0-\Phi_\xi (x,\xi_j^v))|\leqslant M2^{-j}\},$$
and $B^*=\bigcup\limits_{2^{-j}\leqslant r}\bigcup\limits_v R_j^v.$ Then
$$
|B^*|\leqslant \sum\limits_{2^{-j}\leqslant r}\sum\limits_{v}|R_j^v|\lesssim \sum\limits_{2^{-j}\leqslant r}\sum\limits_{v}|\tilde{R}_j^v|\lesssim \sum\limits_{2^{-j}\leqslant r} |\Omega_j|\cdot 2^{-j}2^{-\frac{j(n-1)}{2}}\lesssim r.
$$
Consequently,
\begin{eqnarray*}
\int_{B^*}|\F a(x)|dx\leqslant \|\F a\|_{L^2} \cdot |B^*|^\frac12
\lesssim \|a\|_{L^p} \cdot r^\frac12 
\lesssim r^{-n} r^{\frac{n}{p}} r^\frac12=1,
\end{eqnarray*}
where $\frac1{p}=\frac12+\frac{n-1}{2n},$ and for the second inequality, we have utilized Lemma \ref{Lemma One} mentioned above.

Next, we only need to prove that $\int_{^cB^*}|\F a(x)|dx\lesssim 1.$
To achieve this, we will perform a smooth decomposition of cones based on $\Omega_j$ defined earlier, following these steps:
Suppose that $\xi_j^v\in \Omega_j^J,$$I=\{1,2,\cdots,n\}\setminus J,$ then define
$$\Gamma_j^v=\{\xi\ne 0:\sum\limits_{i\in I}\Big[(\frac{\xi}{|\xi|}-\xi_j^v)_i\Big]^2\leqslant  2^{-j-3} \ \text{and}\ \forall i\in J, \frac{|\xi_i|}{|\xi|}\leqslant 2^{1-\frac{j}{2}}\}.$$
We will construct a sequence of functions $\chi_j^v(\xi)$ that are homogeneous of degree zero in $\xi$, have their support within $\Gamma_j^v$, and satisfy
$$
\sum\limits_{v:\xi_j^v\in \Omega_j}\chi_j^{v}(\xi)=1\quad \forall \xi\ne 0 \ \text{and}\ j\geqslant 0,
$$
and
\begin{eqnarray}\label{e1}
|\partial_\xi^\alpha\chi_j^v(\xi)|\lesssim 2^{\frac{|\alpha|j}{2}}|\xi|^{-|\alpha|}.
\end{eqnarray}
In fact, we can construct these as follows: suppose $\varphi\in C^\infty_0(\R)$ satisfies
$\varphi(x)=\begin{cases} 1& |x|\leqslant 1,
\\ 0& |x|>2.\end{cases}$ and $0\leqslant \varphi(x)\leqslant 1.$
 When $\xi_j^v\in \Omega_j^J,I=\{1,2,\cdots,n\}\setminus J,$ let 
 $$\eta^v_j(\xi)=\varphi\big(\sum\limits_{i\in I}[2^{\frac{j}{2}+2}(\frac{\xi}{|\xi|}-\xi_j^v)_i]^2\big)\cdot \prod\limits_{i\in J}\varphi\big(2^{\frac{j}{2}}\frac{|\xi_i|}{|\xi|}\big),$$
and define $\chi_j^v(\xi)=\frac{\eta^v_j(\xi)}{\sum\limits_{\mu:\xi_j^\mu\in \Omega_j} \eta^\mu_j(\xi)}.$ It can be shown that this satisfies the properties mentioned above.

In addition to the cone decomposition, we also need to perform a radial decomposition for the variable $\xi$ as follows. Let
$\Psi(\xi)=\varphi(|\xi|^2)-\varphi(4|\xi|^2),$
and for $j \geqslant 1$, define $\Psi_j(\xi)=\Psi(2^{-j}\xi),$
and let $\Psi_0(\xi)=\varphi(|\xi|^2).$ Then for $\xi \ne 0,$ we have
$$
1=\Psi_0(\xi)+\sum\limits_{j=1}^\infty \sum\limits_{v:\xi_j^v\in \Omega_j} \chi_j^v(\xi)\Psi_j(\xi).
$$
Now for $j\geqslant 0,$ we define
$$
\F_j^vf(x)=\int_{\R^n}e^{2\pi i \Phi(x,\xi)} \sigma(x,\xi)\chi_j^v(\xi)\Psi_j(\xi)\hat{f}(\xi)d\xi.
$$
Its kernel function $K_j^v(x,y)=\int_{\R^n}e^{2\pi i [\Phi(x,\xi)-y\cdot \xi]}a_j^v(x,\xi)d\xi,$ where $a_j^v(x,\xi)=\sigma(x,\xi)\chi_j^v(\xi)\Psi_j(\xi).$
We will prove the following lemma:
\begin{lemma}\label{lem-tan}
Suppose $\xi_j^v\in \Omega_j^J,$ then
\begin{eqnarray}\label{2024.6.4tan1}
\int_{\R^n} |K_j^v(x,y)| dx\lesssim (1+j)^{|J|} 2^{-\frac{j(n-1)}{2}},\ \forall y\in \R^n.
\end{eqnarray}
\begin{eqnarray}\label{2024.6.4tan3}
\int_{\R^n}|K_j^v(x,y)-K_j^v(x,y')|dx\lesssim 2^j|y-y'|\cdot (1+j)^{|J|} 2^{-\frac{j(n-1)}{2}},\ \forall y,y'\in \R^n.
\end{eqnarray}
Furthermore, when $J=\emptyset$, we have
\begin{eqnarray}\label{2024.6.4tan4}
\int_{^c B^*} | K_j^v(x,y)| dx\lesssim \frac{2^{-j}}{r}2^{-\frac{j(n-1)}{2}}, \forall y\in B,2^{-j}\leqslant r.
\end{eqnarray}
\end{lemma}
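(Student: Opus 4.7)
All three bounds reduce to a suitable pointwise estimate on $K_j^v(x,y)$, which is then integrated over $x$ using the non-degeneracy of $\Phi$ via the change of variables $x\mapsto\Phi_\xi(x,\xi_j^v)$. I introduce coordinates adapted to $\xi_j^v$: let $e_1=\xi_j^v$ and $e_2,\dots,e_n$ span its orthogonal complement, and write $\Phi_\xi(x,\xi_j^v)-y=a_\parallel e_1+a_\perp$. Following the framework of Chapter IX of Stein \cite{Stein}, the target pointwise bound is
$$|K_j^v(x,y)|\lesssim 2^{jm+j(n+1)/2}(1+j)^{|J|}\bigl(1+2^j|a_\parallel|+2^{j/2}|a_\perp|\bigr)^{-N}$$
for each $N$. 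Integrating in $x$ then contributes $2^{-j}\cdot 2^{-j(n-1)/2}$, yielding (\ref{2024.6.4tan1}) with $m=-(n-1)/2$.

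The pointwise bound is obtained by the usual integration by parts using $L_{e_k}e^{2\pi i(\Phi-y\cdot\xi)}=e^{2\pi i(\Phi-y\cdot\xi)}$ with $L_{e_k}=(2\pi i(\Phi_{e_k}-y_{e_k}))^{-1}\partial_{e_k}$, together with derivative estimates on $a_j^v$ in the adapted frame. The radial estimate is the cleanest: expanding $\partial_{e_1}^N\sigma=\sum_{|\alpha|=N}\binom{N}{\alpha}\prod_i(\xi_j^v)_i^{\alpha_i}\partial_\xi^\alpha\sigma$ and using (\ref{calss2}) together with the key cancellation $|(\xi_j^v)_i|/(1+|\xi_i|)\lesssim 2^{-j}$ for $i\notin J$ (because $|\xi_i|\sim 2^j|(\xi_j^v)_i|\gg 1$ on $\supp\chi_j^v\Psi_j$), and the vanishing $(\xi_j^v)_i=0$ for $i\in J$, I obtain $|\partial_{e_1}^N\sigma|\lesssim 2^{jm-jN}$, the sharp $2^{-j}$ gain per radial derivative. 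The perpendicular estimate $|\partial_{e_\ell}^N\sigma|\lesssim 2^{jm-jN/2}$ holds when $J=\emptyset$ by the same computation, but fails in general because $(e_\ell)_i$ with $i\in J$ pair against factors $(1+|\xi_i|)^{-1}$ that need not decay. To remedy this, I further dyadically decompose the $J$-coordinates: for each $i\in J$ write $1=\varphi(|\xi_i|^2)+\sum_{k_i=1}^{\lceil j/2\rceil}\phi(2^{-k_i}|\xi_i|)$ on $\supp a_j^v$, splitting $K_j^v$ into $\lesssim(1+j)^{|J|}$ subkernels. On each piece $|\xi_i|\sim 2^{k_i}$, the new cutoff $\phi(2^{-k_i}|\xi_i|)$ and the symbol both gain $2^{-k_i}$ per $\xi_i$-derivative, and a careful accounting shows that the perpendicular IBP now yields the adapted-frame bound $(1+2^{j/2}|a_\perp|)^{-N}$ per piece; integration in $x$ delivers $\int|K_j^{v,(k_i)}(x,y)|\,dx\lesssim 2^{-j(n-1)/2}$ uniformly in $(k_i)$.

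The remaining estimates follow promptly. For (\ref{2024.6.4tan3}), use $|e^{-2\pi iy\cdot\xi}-e^{-2\pi iy'\cdot\xi}|\leqslant 2\pi|\xi||y-y'|\lesssim 2^j|y-y'|$ on $\supp\Psi_j$, absorbing $|\xi||y-y'|$ into the symbol without affecting any of the derivative estimates above, and apply the proof of (\ref{2024.6.4tan1}) to the modified symbol. For (\ref{2024.6.4tan4}) with $J=\emptyset$ no dyadic decomposition is needed; membership in $^cB^*$ forces either $|(x_0-\Phi_\xi(x,\xi_j^v))_\parallel|\geqslant M2^{-j}$ or $|(x_0-\Phi_\xi(x,\xi_j^v))_\perp|\geqslant M2^{-j/2}$, and combined with $y\in B(x_0,r)$ and $2^{-j}\leqslant r$, taking $N$ sufficiently large in the pointwise bound extracts the surplus factor $2^{-j}/r$. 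The main obstacle is the perpendicular analysis after the $J$-dyadic cuts: one must verify that $\partial_{e_\ell}=\sum_i(e_\ell)_i\partial_{\xi_i}$ still produces the scale-correct $(1+2^{j/2}|a_\perp|)^{-N}$ decay on every piece, which is precisely what the dyadic localization $|\xi_i|\sim 2^{k_i}$ is designed to enable, at the cost of the logarithmic $(1+j)^{|J|}$ factor from summing pieces.
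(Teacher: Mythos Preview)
Your overall strategy matches the paper's: rotate to a frame with $e_1=\xi_j^v$, dyadically localize the $J$-coordinates of $\xi$ (this is exactly the mechanism that produces the $(1+j)^{|J|}$ factor), and integrate by parts. Two steps, however, do not go through as written.

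\textbf{The per-piece perpendicular decay.} On a piece where $|\xi_i|\sim 2^{k_i}$ for $i\in J$, each $\partial_{\xi_i}$ applied to the symbol or the new cutoff gains only $2^{-k_i}$; since $k_i\leqslant j/2$ this is \emph{weaker} than the $2^{-j/2}$ you claim. Hence the per-piece pointwise bound is not $(1+2^{j/2}|a_\perp|)^{-N}$ but genuinely anisotropic,
\[
\bigl(1+2^{j}|\tilde x_1|+2^{j/2}\textstyle\sum_{i\in I\setminus\{1\}}|\tilde x_i|+\sum_{i\in J}2^{k_i}|\tilde x_i|\bigr)^{-N},
\]
with $\tilde x=A^T(\Phi_\xi(x,\xi_j^v)-y)$ and $A$ chosen \emph{block-diagonal} (identity on the $J$-block, so that $\eta_i=\xi_i$ for $i\in J$); a generic perpendicular frame will not separate the scales. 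This is precisely what the paper's operator
$L=I-2^{2j}\partial_{\eta_1}^2-2^j\sum_{i\in I}\partial_{\eta_i}^2-\sum_{i\in J}2^{2(j-l_i)}\partial_{\eta_i}^2$
is built to produce. Your integrated conclusion $\int|K_j^{v,(k_i)}|\,dx\lesssim 2^{-j(n-1)/2}$ is still correct, because the smaller $\eta$-support $|\eta_i|\lesssim 2^{k_i}$ exactly compensates the weaker spatial decay $2^{-k_i}$; but the isotropic pointwise bound you wrote is false, and summing the correct anisotropic bounds over $(k_i)$ does not recover it either.

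\textbf{The off-rectangle estimate (\ref{2024.6.4tan4}).} Using only $x\in{}^cR_j^v$ gives merely $2^{j/2}|x_0-\Phi_\xi|+2^j|\pi_j^v(x_0-\Phi_\xi)|\geqslant M$, a fixed constant; after the shift $|y-x_0|\leqslant r$ (which may be close to $1$) this says nothing about $y-\Phi_\xi$, and no choice of $N$ manufactures the factor $2^{-j}/r$. The paper instead exploits the \emph{coarsest} available scale: pick $k$ with $2^{-k}\sim r$, select $\xi_k^\mu\in\Omega_k^\emptyset$ with $|\xi_k^\mu-\xi_j^v|\lesssim 2^{-k/2}$ (existence uses $J=\emptyset$ and property~(iv) of $\Omega_k$), use $x\in{}^cR_k^\mu$, and then transfer the resulting lower bound from $\xi_k^\mu$ back to $\xi_j^v$. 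The transfer of the projection term relies on the zero-homogeneity of $\Phi_\xi$, which forces $\langle\Phi_{\xi\xi}(x,\xi_k^\mu)\xi_k^\mu,\cdot\rangle=0$ and kills the first-order error. The outcome is the key inequality
\[
2^{j/2}\,|y-\Phi_\xi(x,\xi_j^v)|+2^{j}\,|\pi_j^v(y-\Phi_\xi(x,\xi_j^v))|\ \gtrsim\ (2^{j}r)^{1/2},
\]
so that one factor of $C_j(x,y)^{-1}$ supplies exactly $\lesssim(2^{j}r)^{-1}=2^{-j}/r$.
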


Assuming the above lemma holds, we now use it to prove that $\int_{^c B^*} |\F a(x)|dx\lesssim 1.$  Indeed, the estimation can be divided into two terms based on whether set $J$ is empty or not:
\begin{eqnarray*}
&&\int_{^c B^*} |\F a(x)|dx=\int_{^c B^*} \Big|\sum\limits_{j=0}^\infty\sum\limits_{J:J\subsetneq \{1,2,\cdots,n\}}\sum\limits_{v:\xi_j^v\in \Omega_j^J} \F_j^v a(x)\Big|dx\\
&\leqslant& \int_{^c B^*}\Big|\sum\limits_{j=0}^\infty\sum\limits_{J\neq\emptyset:J\subsetneq \{1,2,\cdots,n\}} \sum\limits_{v:\xi_j^v\in \Omega_j^J}\F_j^v a(x)\Big|dx+\int_{^c B^*}\Big|\sum\limits_{j=0}^\infty \sum\limits_{v:\xi_j^v\in \Omega_j^\emptyset} \F_j^v a(x)\Big|dx=:I+I\!I.
\end{eqnarray*}
Thus,
\begin{eqnarray*}
I&\leqslant& \sum\limits_{j=0}^\infty\sum\limits_{J\neq\emptyset:J\subsetneq \{1,2,\cdots,n\}}\sum\limits_{v:\xi_j^v\in \Omega_j^J}\int_{\R^n} \Big|\int_{\R^n}K_j^v(x,y)a(y)dy\Big|dx\\
&\overset{\eqref{2024.6.4tan1}}{\lesssim}& \sum\limits_{j=0}^\infty\sum\limits_{J\neq\emptyset:J\subsetneq \{1,2,\cdots,n\}} \sum\limits_{v:\xi_j^v\in \Omega_j^J}(1+j)^{|J|} 2^{-\frac{j(n-1)}{2}}\int_{B} |a(y)|dy\\
&\lesssim& \sum\limits_{j=0}^\infty\sum\limits_{J\neq\emptyset:J\subsetneq \{1,2,\cdots,n\}} (1+j)^{n-1} 2^{-\frac{j(n-1)}{2}}|\Omega_j^J|\\
&\lesssim& \sum\limits_{j=0}^\infty (1+j)^{n-1} 2^{-\frac{j}{2}} \lesssim 1.
\end{eqnarray*}

For the term $I\!I$, we further subdivide it into two parts based on the comparison between $2^{-j}$ and $r$:
\begin{eqnarray*}
I\!I\leqslant \sum\limits_{j:2^{-j}\geqslant r}\int_{^c B^*} \Big| \sum\limits_{v:\xi_j^v\in \Omega_j^\emptyset} \F_j^va(x)\Big|dx+\sum\limits_{j:2^{-j}< r}\int_{^c B^*} \Big| \sum\limits_{v:\xi_j^v\in \Omega_j^\emptyset} \F_j^va(x)\Big|dx=:I\!I_1+I\!I_2.
\end{eqnarray*}
Notice that
\begin{eqnarray*}
I\!I_2&\leqslant& \sum\limits_{j:2^{-j}< r}\sum\limits_{v:\xi_j^v\in \Omega_j^\emptyset}\int_B\int_{^c B^*}  |K_j^v(x,y)|\cdot |a(y)|dxdy\\
&\overset{\eqref{2024.6.4tan4}}{\lesssim}& \sum\limits_{j:2^{-j}< r}\frac{2^{-j}}{r} \int_B |a(y)|dy\cdot2^{-\frac{j(n-1)}{2}}|\Omega_j^\emptyset|\lesssim 1.
\end{eqnarray*}
For the term $I\!I_1$, we utilize the vanishing moment condition of the atom $a$, and obtain
\begin{eqnarray*}
I\!I_1&\leqslant &\sum\limits_{j:2^{-j}\geqslant r}\sum\limits_{v:\xi_j^v\in \Omega_j^\emptyset}\int_{^c B^*} \Big|\int_ B [K_j^v(x,y)-K_j^v(x,x_0)]a(y)dy\Big|dx\\ &\overset{\eqref{2024.6.4tan3}}{\lesssim}&
\sum\limits_{j:2^{-j}\geqslant r}\sum\limits_{v:\xi_j^v\in \Omega_j^\emptyset} 2^j 2^{-\frac{j(n-1)}{2}}\int_ B |y-x_0|\cdot |a(y)|dy\\
&\lesssim& \sum\limits_{j:2^{-j}\geqslant r}2^j r\lesssim 1.
\end{eqnarray*}
Hence, $\int_{^c B^*} |\F a(x)|dx\lesssim 1.$ 
Consequently, when $\sigma\in \S^{-\frac{n-1}{2}}$, we have $\F:H^1(\R^n) \to L^1(\R^n).$ The proof for the adjoint operator $\F^*:H^1(\R^n) \to L^1(\R^n)$ is similar, with the main difference being the replacement of $R_j^v$ with $\{x:|x-\Phi_\xi (x_0,\xi_j^v)|\leqslant M2^{-\frac{j}{2}},|\pi_j^v(x-\Phi_\xi (x_0,\xi_j^v))|\leqslant M2^{-j}\}$, which we omit the detail here.

The proof of part {\rm (ii)} utilizes complex interpolation, as detailed in section {\bf 4.9}, chapter IX of Stein's book \cite{Stein}. We omit this process here for brevity.
What remains is to prove Lemma \ref{lem-tan}.

\section{Proof of Lemma\ref{lem-tan}}
Let $\xi_j^v\in \Omega_j^J,$ $I=\{1,2,\cdots,n\}\setminus J.$ Without loss of generality, we assume $I=\{1,2,\cdots,m\},$\\$J=\{m+1,m+2,\cdots,n\}.$ We can perform a coordinate transformation $\xi=A\eta\Leftrightarrow \eta=A^T\xi,$ where $A=\left(\begin{matrix}A_1&0\\0&A_2\end{matrix}\right)$ is an orthogonal matrix, $A_1$ is an $m$-order orthogonal matrix, $A_2$ is an $(n-m)$-order identity matrix, $\det(A) = 1$, $Ae_1 = \xi_j^v$, and thus $col_1(A) = \xi_j^v$.
According to the coordinate transformation formula, we have $K_j^v(x,y)=\int_{\R^n}e^{2\pi i [\Phi(x,A\eta)-y\cdot A\eta]}a_j^v(x,A\eta)d\eta.$  Below we will estimate $K_j^v(x,y)$, with a declaration: all estimates below assume $j$ is greater than a large constant because if $j$ is less than this constant, the estimates we need are trivial.

Now we write $\Phi(x,A\eta)-y\cdot A\eta$ as $[\Phi_\xi(x,Ae_1)-y]\cdot A\eta+h(\eta),$
where $h(\eta)=[\Phi(x,A\eta)-\Phi_\xi(x,Ae_1)\cdot A\eta].$ For convenience, let $\eta=(\eta_1,\eta')^T,$ where $\eta'=(\eta_2,\cdots,\eta_n)^T.$ 
Now we can express $K_j^v(x,y)$ as
$$
K_j^v(x,y)=\int_{\R^n} e^{2\pi i [\Phi_\xi (x,Ae_1)-y]\cdot A\eta} b_j^v(x,\eta)d\eta,
$$
where $b_j^v(x,\eta)=e^{2\pi i h(\eta)}\chi_j^v(A\eta) \cdot \Psi_j(A\eta)\cdot \sigma(x,A\eta).$
Next, we will prove that for $\alpha=(\alpha_1,\alpha_2,\cdots,\alpha_n),$ it holds
\begin{eqnarray}\label{2024.7.1e3}
\Big|\partial_\eta^\alpha e^{2\pi i h(\eta)}\Big|+\Big|\partial_\eta^\alpha[\chi_j^v(A\eta)]\Big|+\Big|\partial_\eta^\alpha[\Psi_j(A\eta)]\Big|\lesssim_\alpha  2^{-\frac{j}{2}\alpha_1-\frac{j}{2}|\alpha|}.
\end{eqnarray}

First, we show that 
\begin{eqnarray}\label{eq2024.7.1}
\Big|\partial_\eta^\alpha e^{2\pi i h(\eta)}\Big|\lesssim_\alpha  2^{-\frac{j}{2}\alpha_1-\frac{j}{2}|\alpha|}.\end{eqnarray}

In fact, when $\xi=A\eta\in \supp(b_j^v(x,\cdot)),$ since $\xi\in \supp (\Psi_j(\cdot)),$ it follows that $2^{j-1}\leqslant |\xi|=|\eta|\leqslant 2^{j+1}.$ 
Simultaneously, we have $|\eta'|\leqslant |\eta-|\eta|e_1|= |\frac{\eta}{|\eta|}-e_1||\eta|=|\frac{\xi}{|\xi|}-\xi_j^v||\xi|\lesssim 2^{\frac{j}{2}}.$

Notice also that $\Phi(x,\xi)$ is homogeneous of degree one in $\xi$, i.e., $\Phi(x,\delta\xi)=\delta\Phi(x,\xi),$ and by differentiation with respect to $\delta$ and setting $\delta = 1$, we obtain $\Phi_\xi(x,\xi)\cdot \xi=\Phi(x,\xi).$
Therefore, $h(\eta)=[\Phi_\xi(x,A\eta)-\Phi_\xi(x,Ae_1)]\cdot A\eta,$ which implies $h(\eta_1,0,\cdots,0)=0,$ and noticing
$\triangledown_\eta h=A^T\big(\Phi_\xi(x,A\eta)-\Phi_\xi(x,Ae_1)\big)$, it follows that $\triangledown_\eta h(\eta_1,0,\cdots,0)=0,$ Further, for $\alpha=(\alpha_1,\alpha_2,\cdots,\alpha_n),$ it holds
\begin{eqnarray}\label{2024.7.1e2}
\partial_\eta^\alpha h(\eta_1,0,\cdots,0)=0,\quad \text{where} \ 0\leqslant \alpha_2+\alpha_3\cdots+\alpha_n\leqslant 1.
\end{eqnarray}

Since $h(\eta)$ is homogeneous of degree one, i.e., $h(\delta\eta)=\delta h(\eta),$ it follows that $\delta^{\alpha}(\partial_\eta^\alpha h)(\delta\eta)=\delta(\partial_\eta^\alpha h)(\eta),$
 and taking $\delta=\frac1{|\eta|},$ we obtain
$|\partial_\eta^\alpha h(\eta)|\lesssim_\alpha \frac1{|\eta|^{|\alpha|-1}}.$ To prove inequality \eqref{eq2024.7.1}, we proceed by examining three distinct cases.

Case (1): $\alpha_2+\alpha_3\cdots+\alpha_n\geqslant 2$, we have $|\partial_\eta^\alpha h(\eta)|\lesssim 2^{-j(|\alpha|-1)} \lesssim  2^{-\frac{j}{2}\alpha_1-\frac{j}{2}|\alpha|}.$

For the other two cases, we employ Taylor's expansion, asserting the existence of $0\leqslant \theta\leqslant 1,$ such that 
\begin{eqnarray*}
\partial_\eta^\alpha h (\eta)&=&\partial_\eta^\alpha h(\eta_1,0,\cdots,0)+\eta'\cdot \triangledown_{\eta'}\partial_\eta^\alpha h(\eta_1,0,\cdots,0)
+\sum\limits_{\beta=(0,\beta_2,\cdots,\beta_n):|\beta|=2}\frac1{\beta!}\eta^\beta \cdot \partial_\eta^{\alpha+\beta} h (\eta_1,\theta\eta)\\
&\overset{\eqref{2024.7.1e2}}{=}&\eta'\cdot \triangledown_{\eta'}\partial_\eta^\alpha h(\eta_1,0,\cdots,0)+\sum\limits_{\beta=(0,\beta_2,\cdots,\beta_n):|\beta|=2}\frac1{\beta!}\eta^\beta \cdot \partial_\eta^{\alpha+\beta} h (\eta_1,\theta\eta')
\end{eqnarray*}

Case (2): $\alpha_2+\alpha_3\cdots+\alpha_n= 1$, then  we have  $|\partial_\eta^\alpha h(\eta)|\lesssim 2^{\frac{j}{2}}2^{-j|\alpha|}+2^j 2^{-j(|\alpha|+1)} \lesssim  2^{-\frac{j}{2}\alpha_1-\frac{j}{2}|\alpha|}.$

Case (3): $\alpha_2+\alpha_3\cdots+\alpha_n= 0$, then by \eqref{2024.7.1e2}, we have $\partial_\eta^\alpha h (\eta)=\sum\limits_{\beta=(0,\beta_2,\cdots,\beta_n):|\beta|=2}\frac1{\beta!}\eta^\beta \cdot \partial_\eta^{\alpha+\beta} h (\eta_1,\theta\eta').$
Hence, $|\partial_\eta^\alpha h(\eta)|\lesssim 2^j 2^{-j(|\alpha|+1)} \lesssim  2^{-\frac{j}{2}\alpha_1-\frac{j}{2}|\alpha|}.$ Combining all three scenarios, we thereby conclude that inequality \eqref{eq2024.7.1} is indeed valid.

We also note that when $\xi=A\eta\in\supp(a_j^v(x,\cdot))$, inequality \eqref{e1} yields:
$
|\partial_\eta^\alpha\widetilde{\chi_j^v}(\eta)|\lesssim 2^{\frac{|\alpha|j}{2}}|\eta|^{-|\alpha|}\sim 2^{-\frac{|\alpha|j}{2}},
$
where $\widetilde{\chi_j^v}(\eta)=\chi_j^v(A\eta)=\frac{\eta^v_j(A\eta)}{\sum\limits_{\mu:\xi_j^\mu\in \Omega_j} \eta^\mu_j(A\eta)}.$
Furthermore, this can be refined to $|\partial_\eta^\alpha\widetilde{\chi_j^v}(\eta)| \lesssim   2^{-\frac{j}{2}\alpha_1-\frac{j}{2}|\alpha|}.$
In fact, since $A=\left(\begin{matrix}A_1&0\\0&A_2\end{matrix}\right)$, $A_1$ is an $m$-order orthogonal matrix, $A_2$ is an $(n-m)$-order identity matrix, we have
\begin{eqnarray*}
\eta^v_j(A\eta)&=&\varphi\big(\sum\limits_{i\in I}[2^{\frac{j}{2}+2}(\frac{A\eta}{|\eta|}-Ae_1)_i]^2\big)\cdot \prod\limits_{i\in J}\varphi\big(2^{\frac{j}{2}}\frac{(A\eta)_i}{|\eta|}\big)\\
&=&\varphi\big(\sum\limits_{i\in I}[2^{\frac{j}{2}+2}(\frac{\eta}{|\eta|}-e_1)_i]^2\big)\cdot \prod\limits_{i\in J}\varphi\big(2^{\frac{j}{2}}\frac{\eta_i}{|\eta|}\big).\end{eqnarray*}

Additionally, we observe that $\frac{\partial}{\partial{\eta_1}}\frac{\eta_1}{|\eta|}=\frac{1}{|\eta|}-\frac{\eta_1^2}{|\eta|^3}=\frac{|\eta'|^2}{|\eta|^3},$  and $\frac{\partial}{\partial{\eta_1}}\frac{\eta'}{|\eta|}=-\frac{\eta'\eta_1}{|\eta|^3},$
thus $|\frac{\partial}{\partial{\eta_1}}\frac{\eta}{|\eta|}|\lesssim \frac{2^{\frac{j}{2}}2^j}{2^{3j}}\sim \frac{2^{-\frac{j}{2}}}{|\eta|}.$ 
This implies that each differentiation of $\eta^v_j(A\eta)$ with respect to $\eta_1$ results in a decay rate of $\frac{1}{|\eta|}$, without producing a coefficient of $2^{\frac{j}{2}}$.
Consequently,
\begin{eqnarray*}
|\partial_\eta^\alpha[\eta_j^v(A\eta)]| &\lesssim& 2^{\frac{(|\alpha|-\alpha_1)j}{2}}|\eta|^{-|\alpha|} \lesssim 2^{-\frac{j}{2}\alpha_1-\frac{j}{2}|\alpha|}.\end{eqnarray*}

Hence,$\Big|\partial_\eta^\alpha[\chi_j^v(A\eta)]\Big|\lesssim_\alpha 2^{-\frac{j}{2}\alpha_1-\frac{j}{2}|\alpha|}.$
It is evident that $\Big|\partial_\eta^\alpha[\Psi_j(A\eta)]\Big|\lesssim_\alpha 2^{-j|\alpha|}\lesssim 2^{-\frac{j}{2}\alpha_1-\frac{j}{2}|\alpha|}.$
Therefore, inequality \eqref{2024.7.1e3} is proven.

The next critical step is to estimate $\partial_\eta^\alpha [\sigma(x,A\eta)]$  and to provide an estimate for the kernel $K_j^v(x,y)$. For this purpose, we require a further decomposition, as detailed below:

Let $\varphi\in C^\infty_0(\R)$ satisfy
$
\varphi(t)=\begin{cases} 1& |t|\leqslant 1,
\\ 0& |t|>2.\end{cases}$
For $0\leqslant l\leqslant j,$ define
$$\varphi_{l,j}(t)=\begin{cases}\varphi(2^{l-j-1}t)-\varphi(2^{l-j}t), &l<j \\ \varphi(2^{-1}t),&l=j.\end{cases}$$

For $0\leqslant \ell=(l_1,l_2,l_3,\cdots,l_{n})\leqslant j,$ that is, $0\leqslant l_i\leqslant j,\forall 1\leqslant i\leqslant n,$ we define
$\delta_{\ell}(\xi) = \prod\limits_{i=1}^{n}\varphi_{l_i,j}(\xi_i).$
It can be proven that $|\partial_{\xi}^\alpha \delta_\ell(\xi)|\lesssim 2^{\ell\cdot \alpha-j|\alpha|},$ where $\alpha=(\alpha_1,\alpha_2,\cdots,\alpha_n)$, $\ell\cdot \alpha=\alpha_1 l_1+\alpha_2 l_2+\cdots+\alpha_n l_n$ and $|\alpha|=\alpha_1+\alpha_2+\cdots+\alpha_n.$

It is evident that when $|\xi|\leqslant 2^{j+1},$ $\sum\limits_{\ell:0\leqslant \ell\leqslant j}\delta_{\ell}(\xi)=1,$ Therefore, we further decompose $K_j^v(x,y)$ as follows:
\begin{eqnarray*}
&&K_j^v(x,y)
=\sum\limits_{\ell:0\leqslant \ell\leqslant j}\int_{\R^n} e^{2\pi i [\Phi_\xi (x,Ae_1)-y]\cdot A\eta} \delta_\ell(A\eta) b_j^v(x,\eta)d\eta,
\end{eqnarray*}
where $b_j^v(x,\eta)=e^{2\pi i h(\eta)}\chi_j^v(A\eta) \cdot \Psi_j(A\eta)\cdot \sigma(x,A\eta).$

When $\eta\in \supp(\delta_\ell(A\cdot) b_j^v(x,\cdot))$, we analyze the range of each component of $\xi = A\eta$ below.\\
{\rm (i)} $2^{j-1}\leqslant |\xi|=|\eta|\leqslant 2^{j+1},$\quad $|\eta'|\lesssim 2^{\frac{j}{2}}.$\\
{\rm (ii)} $\xi\in \supp(\chi_j^v(\cdot))\Rightarrow$$\begin{cases}[2^{\frac{j}{2}+2}(\frac{\xi}{|\xi|}-\xi_j^v)_i]^2\leqslant 2\Rightarrow |\frac{\xi_i}{|\xi|}-(\xi_j^v)_i|\leqslant 2^{-\frac{j}{2}-1},&\forall\ i\in I \\ 2^\frac{j}{2}\frac{|\xi_i|}{|\xi|}\leqslant 2\Rightarrow \frac{|\xi_i|}{|\xi|}\leqslant 2^{-\frac{j}{2}+1},&\forall\ i\in J.\end{cases}$\\
{\rm (iii)} $\xi_j^v\in \Omega_j^J\Rightarrow \begin{cases}|(\xi_j^v)_i|\geqslant 2^{-\frac{j}{2}},&\forall\ i\in I \\ (\xi_j^v)_i=0,&\forall\ i\in J.\end{cases}$\\
{\rm (iv)} $\xi\in \supp(\delta_\ell(\cdot)) \Rightarrow  \forall 1\leqslant i\leqslant n, \begin{cases}2^{j-l_i}\leqslant |\xi_i|\leqslant 2^{j-l_i+2} \Rightarrow 2^{-l_i-1}\leqslant \frac{|\xi_i|}{|\xi|}\leqslant 2^{-l_i+3},&\forall\ 
l_i<j\\ |\xi_i|\leqslant 4\Rightarrow \frac{|\xi_i|}{|\xi|} \leqslant 2^{-j+3},&\forall\ l_i=j.\end{cases}$

From {\rm (ii)} and {\rm (iii)},  when $i\in I$, $\frac{|\xi_i|}{|\xi|}\sim |(\xi_j^v)_i|\geqslant 2^{-\frac{j}{2}}$. When $j$ is sufficiently large, according to {\rm (iv)}, it must be that \begin{eqnarray}\label{2024.7.5ie0}
l_i<j \ \text{ and } \ |(\xi_j^v)_i|\sim\frac{|\xi_i|}{|\xi|}\sim 2^{-l_i}\Rightarrow 2^{-l_i}\gtrsim 2^{-\frac{j}{2}}.
\end{eqnarray}

From {\rm (ii)} and {\rm (iv)}, when $i\in J$, we have $2^{-l_i}\lesssim 2^{-\frac{j}{2}}.$

We can now estimate the partial derivatives of $\sigma(x,A\eta)$ and $\delta_\ell(A\eta)$ with respect to $\eta$. Since $\sigma(x,\eta)\in \S^{-\frac{n-1}{2}}$, it follows that
\begin{eqnarray*}
\Big|\frac{\partial}{\partial{\eta_1}} [\sigma(x,A\eta)]\Big|&=&|(\triangledown_{\xi}\sigma)(x,\xi)\cdot \xi_j^v|
\overset{\eqref{2024.7.5ie0}}{\lesssim} 2^{-j\frac{n-1}{2}} \sum\limits_{i\in I}\frac{2^{-l_i}}{1+2^{j-l_i}}\sim  2^{-j\frac{n-1}{2}}2^{-j},
\end{eqnarray*}
\begin{eqnarray*}
\Big|\frac{\partial}{\partial{\eta_1}} [\delta_\ell(A\eta)]\Big|&=&|(\triangledown_{\xi}\delta_\ell)(\xi)\cdot \xi_j^v|
\overset{\eqref{2024.7.5ie0}}{\lesssim} \sum\limits_{i\in I}2^{l_i-j}\cdot 2^{-l_i}\sim  2^{-j},
\end{eqnarray*}

Notice that $\triangledown_{\eta} [\sigma(x,A\eta)]=A^T[(\triangledown_\xi\sigma)(x,A\eta)],$ thus for $i\in I\setminus\{1\}$, we have
\begin{eqnarray*}|\frac{\partial}{\partial \eta_i} [\sigma(x,A\eta)]|\leqslant \Big|\sum\limits_{k=1}^m (\frac{\partial}{\partial \xi_k}\sigma)(x,\xi)\Big|\lesssim 2^{-j\frac{n-1}{2}}\sum\limits_{k\in I}\frac1{1+2^{j-l_k}}\overset{\eqref{2024.7.5ie0}}{\lesssim} 2^{-j\frac{n-1}{2}} 2^{-\frac{j}{2}}.
\end{eqnarray*}
\begin{eqnarray*}|\frac{\partial}{\partial \eta_i} [\delta_\ell(A\eta)]|\leqslant \Big|\sum\limits_{k=1}^m (\frac{\partial}{\partial \xi_k}\delta_\ell)(\xi)\Big|\lesssim \sum\limits_{k\in I} 2^{l_k-j}\overset{\eqref{2024.7.5ie0}}{\lesssim} 2^{-\frac{j}{2}}.
\end{eqnarray*}

When $i \in J$ and if $l_i < j$, then
\begin{eqnarray*}|\frac{\partial}{\partial \eta_i} [\sigma(x,A\eta)]|=\Big|\big(\frac{\partial}{\partial \xi_i}\sigma\big)(x,\xi)\Big|\lesssim 2^{-j\frac{n-1}{2}} 2^{l_i-j},
\end{eqnarray*}
and
\begin{eqnarray*}|\frac{\partial}{\partial \eta_i} [\delta_\ell(A\eta)]|=\Big|\big(\frac{\partial}{\partial \xi_i}\delta_\ell\big)(\xi)\Big|\lesssim 2^{l_i-j}.
\end{eqnarray*}
If $l_i = j$, the above inequalities hold trivially.

Similarly, for $\alpha=(\alpha_1,\alpha_2,\cdots,\alpha_n),$ we can further obtain
\begin{eqnarray*}
\Big|\partial_\eta^\alpha [\sigma(x,A\eta)]\Big|\lesssim  2^{-j\frac{n-1}{2}}\cdot 2^{-\frac{j}{2}\alpha_1-\frac{j}{2}\sum\limits_{i\in I}\alpha_i-\sum\limits_{i\in J}\alpha_i(j-l_i)},
\end{eqnarray*}
and
\begin{eqnarray*}
\Big|\partial_\eta^\alpha [\delta_\ell(A\eta)]\Big|\lesssim   2^{-\frac{j}{2}\alpha_1-\frac{j}{2}\sum\limits_{i\in I}\alpha_i-\sum\limits_{i\in J}\alpha_i(j-l_i)}.
\end{eqnarray*}
Let $L=I-2^{2j}\frac{\partial^2}{\partial \eta_1^2}-2^j\sum\limits_{i\in I} \frac{\partial^2}{\partial \eta_i^2}-\sum\limits_{i\in J} 2^{2(j-l_i)}\frac{\partial^2}{\partial \eta_i^2}.$
Based on the estimates above, we have
$$|L^N[\delta_\ell(A\eta)b_j^v(x,\eta)]|\lesssim_N 2^{-\frac{j(n-1)}{2}}.$$
Since $L^N(e^{2\pi i [\Phi_\xi (x,Ae_1)-y]\cdot A\eta})=C_j(x,y)^N e^{2\pi i [\Phi_\xi (x,Ae_1)-y]\cdot A\eta},$ where $C_j(x,y)=1+4\pi^2 \big[2^{2j}|\tilde{x}_1|^2+ 2^j\sum\limits_{i\in I} |\tilde{x}_i|^2+ \sum\limits_{i\in J} 2^{2(j-l_i)}|\tilde{x}_i|^2\big]$ and $\tilde{x}=A^T[\Phi_\xi (x,Ae_1)-y].$
Therefore,
\begin{eqnarray*}
K_{j,v}(x,y)&=&\sum\limits_{\ell:0\leqslant \ell \leqslant j \atop \forall\ i\in I, 2^{-l_i}\sim |(\xi_j^v)_i| }\frac1{C_j(x,y)^N}\int_{\R^n} L^N\big(e^{2\pi i [\Phi_\xi (x,Ae_1)-y]\cdot A\eta}\big) \delta_\ell(A\eta) b_j^v(x,\eta)d\eta\\
&=&\sum\limits_{\ell:0\leqslant \ell \leqslant j \atop \forall\ i\in I, 2^{-l_i}\sim |(\xi_j^v)_i| }\frac1{C_j(x,y)^N}\int_{\R^n} e^{2\pi i [\Phi_\xi (x,Ae_1)-y]\cdot A\eta} L^N\big[\delta_\ell(A\eta)b_j^v(x,\eta)\big]d\eta.
\end{eqnarray*}
Choosing $N>\frac{n}{2}$, and by $\supp[\delta_\ell(A\cdot) b_j^v(x,\cdot)]\subseteq \{\eta:|\eta|\lesssim 2^{j},|\eta'|\lesssim 2^{\frac{j}{2}},|\eta_i|\lesssim 2^{j-l_i}, \forall i\in J\}$, it follows that
\begin{eqnarray}\label{e2}
|K_{j,v}(x,y)|\lesssim \sum\limits_{\ell:0\leqslant \ell \leqslant j \atop \forall\ i\in I, 2^{-l_i}\sim |(\xi_j^v)_i| }\frac1{|C_{j}(x,y)|^N} 2^{-\frac{j(n-1)}{2}} 2^{j}\cdot 2^{\frac{j(m-1)}{2}}\cdot 2^{\sum\limits_{i\in J} (j-l_i)},
\end{eqnarray}
therefore
\begin{eqnarray*}
&&\int_{\R^n} |K_{j,v}(x,y)| dx\lesssim \sum\limits_{\ell:0\leqslant \ell \leqslant j \atop \forall\ i\in I, 2^{-l_i}\sim |(\xi_j^v)_i| }2^{\frac{j(n-m)}{2}-\sum\limits_{i\in J} l_i}\cdot\int_{\R^n}\frac{2^j}{|C_{j}(x,y)|^N}dx\\
&\lesssim &\sum\limits_{\ell:0\leqslant \ell \leqslant j \atop \forall\ i\in I, 2^{-l_i}\sim |(\xi_j^v)_i| }2^{\frac{j(n-m)}{2}-\sum\limits_{i\in J} l_i}\cdot\\&&\int_{\R^n}\frac{2^j}{\big(1+ 2^{2j}|[A^T(u-y)]_1|^{2}+ 2^{j}\sum\limits_{i\in I}|[A^T(u-y)]_i|^{2}+ \sum\limits_{i\in J}2^{2j-2l_i} |[A^T(u-y)]_i|^{2}\big)^N} \frac1{\left|\det\left(\frac{\partial^2\Phi}{\partial x_i\partial \xi_j}\right)\right|}du\\
&\lesssim&\sum\limits_{\ell:0\leqslant \ell \leqslant j \atop \forall\ i\in I, 2^{-l_i}\sim |(\xi_j^v)_i| }2^{\frac{j(n-m)}{2}-\sum\limits_{i\in J} l_i}\int_{\R^n}\frac{2^j}{(1+2^{2j}|v_1|^{2}+2^{j}\sum\limits_{i\in I\setminus\{1\}} |v_i|^{2}+\sum\limits_{i\in J}2^{2j-2l_i} |v_i|^{2})^{N}} dv\\
&\lesssim& \sum\limits_{\ell:0\leqslant \ell \leqslant j \atop \forall\ i\in I, 2^{-l_i}\sim |(\xi_j^v)_i| }2^{\frac{j(n-m)}{2}-\sum\limits_{i\in J} l_i} \cdot 2^{-\frac{j(m-1)}{2}}\cdot 2^{-(n-m)j+\sum\limits_{i\in J} l_i}\\
&\lesssim& (1+j)^{n-m} 2^{-\frac{j(n-1)}{2}}=(1+j)^{|J|} 2^{-\frac{j(n-1)}{2}}.
\end{eqnarray*}

Hence, the inequality \eqref{2024.6.4tan1} in Lemma \ref{lem-tan} holds.

Furthermore, we have
\begin{eqnarray*}
|\triangledown_y K_j^v(x,y)|&\lesssim&\sum\limits_{\ell:0\leqslant \ell \leqslant j \atop \forall\ i\in I, 2^{-l_i}\sim |(\xi_j^v)_i| }\Big|\int_{\R^n} e^{2\pi i [\Phi_\xi (x,Ae_1)-y]\cdot A\eta} \cdot 2\pi i \cdot A\eta b_j^v(x,\eta)\delta_\ell(A\eta)d\eta\Big|\\
&=&\sum\limits_{\ell:0\leqslant \ell \leqslant j \atop \forall\ i\in I, 2^{-l_i}\sim |(\xi_j^v)_i| }\Big|\frac1{C_j(x,y)^N}\int_{\R^n} L^N\big(e^{2\pi i [\Phi_\xi (x,Ae_1)-y]\cdot A\eta}\big)\cdot 2\pi i\cdot A\eta b_j^v(x,\eta)\delta_\ell(A\eta)d\eta\Big|\\
&=&\sum\limits_{\ell:0\leqslant \ell \leqslant j \atop \forall\ i\in I, 2^{-l_i}\sim |(\xi_j^v)_i| }\Big|\frac1{C_j(x,y)^N}\int_{\R^n} e^{2\pi i \cdot[\Phi_\xi (x,Ae_1)-y]\cdot A\eta} L^N\big(2\pi i A\eta b_j^v(x,\eta)\delta_\ell(A\eta)d\eta\big)\Big|\\
&\lesssim&\sum\limits_{\ell:0\leqslant \ell \leqslant j \atop \forall\ i\in I, 2^{-l_i}\sim |(\xi_j^v)_i| }\frac1{|C_{j}(x,y)|^N} 2^{-\frac{j(n-1)}{2}} 2^{2j}\cdot 2^{\frac{j(m-1)}{2}}\cdot 2^{\sum\limits_{i\in J} (j-l_i)}.
\end{eqnarray*}
Consequently, 
\begin{eqnarray*}
\int_{\R^n} |\triangledown_y K_j^v(x,y)| dx\lesssim  2^j\cdot (1+j)^{|J|}\cdot 2^{-\frac{j(n-1)}{2}}.
\end{eqnarray*}
Hence, we obtain
$$
\int_{\R^n} |K_j^v(x,y)-K_j^v(x,y')| dx\lesssim  2^j|y-y'|\cdot  (1+j)^{|J|} 2^{-\frac{j(n-1)}{2}}\quad\forall y,y'\in \R^n.
$$
This confirms the inequality \eqref{2024.6.4tan3} of Lemma \ref{lem-tan} holds.

Next, we will prove the inequality \eqref{2024.6.4tan4}, which is to show that under the condition $J = \emptyset$,
$\int_{^c B^*} | K_j^v(x,y)| dx\lesssim \frac{2^{-j}}{r}2^{-\frac{j(n-1)}{2}},$ where $y\in B,2^{-j}\leqslant r.$ 
The crucial step is to demonstrate the following inequality:
\begin{eqnarray}\label{2024.7.5ie5}
2^{\frac{j}{2}}|y-\Phi_\xi (x,\xi_j^v)|+2^{j}|\pi_j^v(y-\Phi_\xi (x,\xi_j^v))|\gtrsim \big(2^j r\big)^\frac12.
\end{eqnarray}

We first assume this inequality holds. Then, according to \eqref{e2}, we have 
$| K_j^v(x,y)|\lesssim  \frac{2^j }{C_j(x,y)^{N-1}}\frac{2^{-j}}{r}.$
This implies that $\int_{^c B^*} | K_j^v(x,y)| dx\lesssim \frac{2^{-j}}{r}2^{-\frac{j(n-1)}{2}}.$
This confirms the inequality \eqref{2024.6.4tan4} of Lemma \ref{lem-tan} holds.

We now proceed to prove the inequality \eqref{2024.7.5ie5}. Note that $B^*=\bigcup\limits_{k:2^{-k}\leqslant r}\bigcup\limits_v R_j^v,$ thus $^c B^*=\bigcap\limits_{k:2^{-k}\leqslant r}\bigcap\limits_\mu  {^c R_k^\mu}.$
Choose $k$ such that $2^{-k}\leqslant r<2^{-k+1}$. By the construction of $\Omega_k$ and that $J = \emptyset$, there exists a $\xi_k^\mu$ for which $|\xi_j^v-\xi_k^\mu|\leqslant 2^{-\frac{k}{2}-2}.$ When $x\in \lc B^*$, it implies $x\in {^cR_k^\mu},$
thereby giving $$2^{\frac{k}{2}}|x_0-\Phi_\xi (x,\xi_k^\mu)|+2^{k}|\pi_k^\mu(x_0-\Phi_\xi (x,\xi_k^\mu))|\geqslant M,$$ where $M$ is a large constant. Since $|y-x_0|\leqslant r<2^{-k+1},$ it follows that
$$2^{\frac{k}{2}}|y-\Phi_\xi (x,\xi_k^\mu)|+2^{k}|\pi_k^\mu(y-\Phi_\xi (x,\xi_k^\mu))|\geqslant \frac{M}{2},$$
thus either $|y-\Phi_\xi (x,\xi_k^\mu)|\geqslant \frac{M}{4} 2^{-\frac{k}{2}}$ or $|\pi_k^\mu(y-\Phi_\xi (x,\xi_k^\mu))|\geqslant \frac{M}{4} 2^{-k}.$

When $|y-\Phi_\xi (x,\xi_k^\mu)|\geqslant \frac{M}{4} 2^{-\frac{k}{2}}$, note that $|\Phi_\xi (x,\xi_k^\mu)-\Phi_\xi (x,\xi_j^v)|\lesssim |\xi_k^\mu-\xi_j^v|\lesssim 2^{-\frac{k}{2}},$
thus $|y-\Phi_\xi (x,\xi_j^v)|\gtrsim 2^{-\frac{k}{2}}.$ Therefore, the inequality \eqref{2024.7.5ie5} holds.

When $|\pi_k^\mu(y-\Phi_\xi (x,\xi_k^\mu))|\geqslant \frac{M}{4}  2^{-k}$, note that
\begin{eqnarray*}
&&|\pi_k^\mu\big[\Phi_\xi (x,\xi_j^v)-\Phi_\xi (x,\xi_k^\mu)\big]|=|\langle\Phi_\xi (x,\xi_j^v)-\Phi_\xi (x,\xi_k^\mu),\xi_k^\mu \rangle|\\
&\leqslant&\Big|\langle \Phi_{\xi\xi} (x,\xi_k^\mu),(\xi_j-\xi_k^\mu)\otimes\xi_k^\mu\rangle\Big|\\
&+&\Big|\big\langle \sum\limits_{\alpha:|\alpha|=2}\partial_{\xi}^\alpha \Phi_\xi \big(x,\xi_k^\mu+\theta (\xi_j^v-\xi_k^\mu)\big)\cdot(\xi_j-\xi_k^\mu)^\alpha, \xi_k^\mu \big\rangle\Big|,\quad \text{where}\ 0<\theta<1.\\
&\lesssim&\Big|\langle \sum\limits_{i=1}^n\partial_{\xi_i}\Phi_\xi (x,\xi_k^\mu)(\xi_k^\mu)_i, \xi_j-\xi_k^\mu \rangle\Big|+2^{-k}\\
&=&2^{-k},
\end{eqnarray*}
here the zero-homogeneity of $\Phi_{\xi}$ with respect to $\xi$ is used, thus the directional derivative in the direction of $\xi_k^\mu$  is 0.
Consequently, $|\pi_k^\mu(y-\Phi_\xi (x,\xi_j^v))|\geqslant \frac{M}{8}  2^{-k}.$ Now let $\theta=\frac{|\pi_k^\mu(y-\Phi_\xi (x,\xi_j^v))|}{|y-\Phi_\xi (x,\xi_j^v)|}.$\\
Case (1): $\theta>  2^{1-\frac{k}{2}}$, then
\begin{eqnarray*}
|\{A^T[y-\Phi_\xi (x,\xi_j^v)]\}_1|&=&|\langle y-\Phi_\xi (x,\xi_j^v),\xi_j^v\rangle|\\
&\geqslant&-|\langle y-\Phi_\xi (x,\xi_j^v),\xi_j^v-\xi_k^\mu\rangle|+|\langle y-\Phi_\xi (x,\xi_j^v),\xi_k^\mu\rangle|\\
&\geqslant&-2^{-\frac{k}{2}}|y-\Phi_\xi (x,\xi_j^v)|+|\pi_k^\mu(y-\Phi_\xi (x,\xi_j^v))|\\
&=&(-\frac{2^{-\frac{k}{2}}}{\theta}+1)|\pi_k^\mu(y-\Phi_\xi (x,\xi_j^v))|\gtrsim 2^{-k}.
\end{eqnarray*}
Case (2): $\theta\leqslant 2^{1-\frac{k}{2}}$, $|y-\Phi_\xi (x,\xi_j^v)|=\frac{|\pi_k^\mu(y-\Phi_\xi (x,\xi_j^v))|}{\theta}\gtrsim \frac{2^{-k}}{2^{-\frac{k}{2}}}=2^{-\frac{k}{2}}.$

In both cases, we can deduce that $2^{\frac{j}{2}}|A^T[y-\Phi_\xi (x,\xi_j^v)]|+2^{j}|\{A^T[y-\Phi_\xi (x,\xi_j^v)]\}_1|\gtrsim 2^{\frac{j-k}{2}}$, thereby verifying the inequality \eqref{2024.7.5ie5}. This concludes the proof of Lemma \ref{lem-tan}.

 \bigskip
 
\noindent  Chaoqiang Tan, Department of Mathematics, Shantou
University, Shantou, 515063, P. R. China.

\noindent {\it E-mail address}: \texttt{cqtan@stu.edu.cn }

\smallskip
\noindent  Zipeng Wang, Department of Mathematics, School of Science, Westlake University, Hangzhou, Zhejiang 310030, P. R. China.

\noindent {\it E-mail address}: \texttt{wangzipeng@westlake.edu.cn }

\end{document}